\documentclass[12pt,leqno]{amsart}
\usepackage{times}
\usepackage{amsmath,amssymb,amscd,amsthm}
\usepackage{latexsym}
\usepackage{epsfig}

\headheight=8pt
\topmargin=0pt
\textheight=624pt
\textwidth=432pt
\oddsidemargin=18pt
\evensidemargin=18pt

\newtheorem{theorem}{Theorem}
\newtheorem{lemma}{Lemma}
\newtheorem{proposition}{Proposition}

\newtheorem{corollary}{Corollary}

\newtheorem{claim}{Claim}









\newcommand{\p}{\partial}


\newcommand{\beq}{\begin{equation}}
\newcommand{\eeq}{\end{equation}}
\newcommand{\beqna}{\begin{eqnarray*}}
\newcommand{\eeqna}{\end{eqnarray*}}
\newcommand{\beqn}{\begin{equation*}}
\newcommand{\eeqn}{\end{equation*}}
\newcommand{\bp}{\begin{proof}}
\newcommand{\ep}{\end{proof}}
\newcommand{\bprop}{\begin{proposition}}
\newcommand{\eprop}{\end{proposition}}
\newcommand{\bt}{\begin{theorem}}
\newcommand{\et}{\end{theorem}}
\newcommand{\bex}{\begin{Example}}
\newcommand{\eex}{\end{Example}}
\newcommand{\bc}{\begin{corollary}}
\newcommand{\ec}{\end{corollary}}
\newcommand{\bcl}{\begin{claim}}
\newcommand{\ecl}{\end{claim}}
\newcommand{\bl}{\begin{lemma}}
\newcommand{\el}{\end{lemma}}

\begin{document}

\title
[Periodic Korteweg-de Vries Equation]
{On the periodic Korteweg-de Vries Equation: \\a normal form approach}

\author{Seungly Oh}

\address{Seungly Oh, 
405, Snow Hall, 1460 Jayhawk Blvd. , 
Department of Mathematics,
University of Kansas,
Lawrence, KS~66045, USA}
\date{\today}

\subjclass[2000]{Primary: 35Q53; Secondary: 35B10, 35B30. }

\keywords{KdV equations, normal forms, solutions in low regularity}

\begin{abstract}
This paper discusses an improved smoothing phenomena for low-regularity solutions of the Korteweg-de Vries (KdV) equation in the periodic settings by means of normal form transformation.  As a result, the solution map from a ball on $H^{-\frac{1}{2}+}$ to $C_0^t ([0,T], H^{-\frac{1}{2}+})$ can be shown to be Lipschitz in a $H^{0+}_x$ topology, where the Lipschitz constant only depends on the rough norm~$\|u_0\|_{H^{-\frac{1}{2}+}}$ of the initial data.   A similar episode has been observed in a recent paper on 1D quadratic Schr\"odinger equation in low-regularity setting. 
\end{abstract}

\maketitle
\date{today}

\section{Introduction}
Consider the real-valued periodic Korteweg-de Vries (KdV) equation
\begin{equation}\label{kdvt}
\left|\begin{array}{l} u_t + u_{xxx} = \partial_x ( u^2); \qquad (t,x)\in [0,T]\times \mathbf{T}\\
u(0,x)= u_0 \in H^{-s} (\mathbf{T})\end{array}\right. 
\end{equation}
where $\mathbf{T} = \mathbf{R} \mod [0,2\pi]$.  Using function spaces introduced by Bourgain in \cite{Bour}, the local well-posedness of \eqref{kdvt} for $s< 1/2$ was proved by Kenig, Ponce, Vega in \cite{KPV1}.   Ten years later, the global well-posedness of this problem for $s\leq 1/2$ was proved by Colliander, Keel, Staffilani, Takaoka and Tao in \cite{I1}, where they introduced \emph{I-method} for constructing almost-conserved quantities.  For a further survey on this topic, refer to \cite{Titi} and the references therein.\\

The purpose of this paper is not to extend these results, which are sharp in the sense of uniformly continuous solution map, but instead to examine some properties of solutions with rough initial data.  In fact, we will take for granted the well-posedness of \eqref{kdvt}, but we remark that this method can be refined to produce a priori estimates for the corresponding contraction argument to prove the quantitative local well-posedness of periodic KdV (refer \cite{OS}).  Our main result shows that, at least locally in time, the solution can be decomposed into an explicit component containing the initial data and a \emph{smoother} component.  Furthermore, this leads to a new property of the solution map-- i.e. Lipschitz-ness in a considerably smoother space.  This is a direct analogue of our previous result \cite{OS} for 1-D quadratic Schr\"odinger equation.\\

We achieve this by constructing a bilinear pseudo-differential operator, which \emph{gains} regularity in space essentially by taking advantage of \emph{time resonance} described in \cite{shatah1}.  This technique is known as \emph{normal form}, orginating from the well-known technique in ODE.  It was first applied within the context of PDE by Shatah \cite{shatah} to analyse quadratic non-linear Klein-Gordon equations. More recently, this technique came into much attention, particularly by Germain, Masmoudi and Shatah in \cite{GMS}.\\

This technique was applied in the periodic KdV setting in \cite{Titi}, where it is referred to as \emph{differentiation by parts}.  In \cite{niko}, the differentiation-by-parts technique was used to show a global smoothing for the periodic KdV.  Here, Erdogan and Tzirakis proved that the global solution~$u$ of \eqref{kdvt} with initial data $u_0 \in H^{-s}$ for $s<1/2$ satisfies $u-e^{-t\p_x^3}u_0 \in H^{-s + \gamma}$ where $\gamma <\min (-2s+1,1)$.  Close to $L^2$, this gives a gain of a full derivative.  But as we approach $H^{-1/2}$, this gain gradually disappears.\\

In this paper, we approach the problem in a different manner.  To motivate our setting, we recall from \cite{Bour} that a trilinear resonant term causes a trouble in the low regularity analysis of modified KdV.  A similar trouble was observed for KdV in \cite{Titi} and \cite{niko} after performing differentiation by parts.  In view of this, we will filter out an explicit \emph{roughest resonant} term, denoted $R^*[u_0] \in H^{-s}$, instead of filtering out the free solution~$e^{-t\p_x^3} u_0$ as in \cite{niko}.  The exact formulation of this term will be given in Section~\ref{s31}, as well as the Lipschitz property of this map in Lemma~\ref{lipschitz}.  Then we observe that there can be a smoothing of $1/2$~derivatives even for the lowest known Sobolev index $H^{-1/2+}$.  We remark that this is an improvement from the result in \cite{niko} for the range of indices $s\in (1/4, 1/2)$.\\

The following is the main result of this work:

\begin{theorem}\label{thm2}
Let $0\leq s<1/2$ and $0<\delta \ll \frac{1}{2}-s$.  For any real-valued $u_0 \in H^{-s}(\mathbf{T})$ with $\widehat{u_0}(0) =0$, there exists a time interval~$[0,T]$ with $T \sim \|u(0)\|_{H^{-s}(\mathbf{T})}^{-\alpha}$ for some $\alpha>0$ so that the real-valued solution~$u\in C^0_t([0,T]; H^{-s}(\mathbf{T}))$ to the periodic boundary value problem~\eqref{kdvt} can be decomposed in the following manner: $u = R^*[u_0] + h + w$  where 
$$
R^*[u_0] \in L_t^{\infty} H^{-s}_x;\quad h\in L_t^{\infty} H^{-s+1}_x;\quad w\in X_T^{-s+\frac{1}{2} , \frac{1}{2}+\delta}.
$$

Furthermore, we can write the Lipschitz property of the solution map in a smoother space:

$$
\|u-v\|_{C_t([0,T]; H_x^{-s+\frac{1}{2}})} \leq C_{N,T,\delta} \|u_0 - v_0\|_{H_x^{-s +\frac{1}{2}} },
$$
where $\|u_0\|_{H^{-s}} + \|v_0\|_{H^{-s}} <N$, and $C_{N, T,\delta}$ depends only on $N$, $T$ and $\delta$.
\end{theorem}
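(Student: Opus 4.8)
The plan is to pass to the interaction representation, carry out a single normal form transformation (differentiation by parts in time), isolate the non-smoothing resonant contribution as the explicit profile $R^*[u_0]$, and then estimate the genuinely smoother remainder in the Bourgain space. Writing $v(t):=e^{t\p_x^3}u(t)$, so that $\widehat{v}(t,k)=e^{-itk^3}\widehat{u}(t,k)$, and using the factorization $k^3-k_1^3-k_2^3=3kk_1k_2$ valid on $k=k_1+k_2$, equation~\eqref{kdvt} becomes
\[
\p_t\widehat{v}(t,k)=ik\sum_{\substack{k_1+k_2=k\\ k_1,k_2\neq0}}e^{-3itkk_1k_2}\,\widehat{v}(t,k_1)\,\widehat{v}(t,k_2),
\]
the hypothesis $\widehat{u_0}(0)=0$ together with conservation of the mean removing every zero mode. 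Since $k,k_1,k_2$ are then all nonzero, the phase $3kk_1k_2$ never vanishes, so the quadratic interaction is globally non-resonant and I would integrate by parts in $t$ through $e^{-3itkk_1k_2}=-(3ikk_1k_2)^{-1}\p_t e^{-3itkk_1k_2}$.

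This differentiation by parts produces two objects. First, a bilinear boundary term carrying, up to constants, the multiplier $(k_1k_2)^{-1}$; a direct frequency count shows it lies in $H^{-s+1}_x$ uniformly (the factor $|k|^{-1}$ off the highest frequency gains a full derivative, and the low-frequency slot $|k_i|\geq1$ costs nothing on $\mathbf{T}$), so I collect it into $h\in L^\infty_tH^{-s+1}_x$. Second, an integrated term in which $\p_t\widehat{v}$ has been replaced by the nonlinearity, producing a cubic interaction whose combined phase is governed by the three-frequency resonance function $(k_1+k_2+k_3)^3-k_1^3-k_2^3-k_3^3=3(k_1+k_2)(k_2+k_3)(k_3+k_1)$. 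I would split this cubic term according to whether one of the factors $(k_i+k_j)$ vanishes. On the non-resonant complement the combined phase is bounded below, and a standard $X^{b,s}$ estimate (or a second differentiation by parts) converts this lower bound into a gain of half a derivative; this is the source of $w\in X_T^{-s+\frac12,\frac12+\delta}$, and the restriction to a \emph{half} derivative comes exactly from the near-resonant cluster where $\min|k_i+k_j|$ is of size $O(1)$. On the resonant set, say $k_3=-k_2$, the two paired frequencies combine (using that $v$ is real, $\widehat{v}(-k_2)=\overline{\widehat{v}(k_2)}$) into a frequency-dependent phase modulation of $\widehat{v}(k_1)$ — a gauge-type factor determined by the data — which does not smooth. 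Freezing this modulation onto the free evolution yields the explicit functional $R^*[u_0]\in L^\infty_tH^{-s}_x$ constructed in Section~\ref{s31}; its roughness is inherited from the free evolution it dresses, and its Lipschitz mapping property is Lemma~\ref{lipschitz}.

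With $R^*[u_0]$ and $h$ defined, I would set $w:=u-R^*[u_0]-h$ and, using the well-posedness of~\eqref{kdvt} taken for granted together with the a~priori bound $\|u\|_{C_tH^{-s}}\lesssim_N1$, derive the Duhamel equation solved by $w$ and bootstrap its norm. The forcing for $w$ is multilinear in $(R^*[u_0],h,w)$; the bilinear and (resonant and non-resonant) trilinear $X^{b,s}$ estimates, combined with the smoothing above and the background bounds $\|R^*[u_0]\|_{H^{-s}},\|h\|_{H^{-s+1}}\lesssim_N1$, close the estimate $\|w\|_{X_T^{-s+\frac12,\frac12+\delta}}\lesssim_N1$ on the short interval $[0,T]$ with $T\sim\|u_0\|_{H^{-s}}^{-\alpha}$; the $H^{-s}$-size of the data enters only through these background norms, which is what forces the short lifespan. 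Continuity in time and the embedding $X_T^{-s+\frac12,\frac12+\delta}\hookrightarrow C_tH^{-s+\frac12}_x$ give the stated decomposition.

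For the Lipschitz statement I would take two solutions $u,v$ with $\|u_0\|_{H^{-s}}+\|v_0\|_{H^{-s}}<N$, write $u-v=(R^*[u_0]-R^*[v_0])+(h_u-h_v)+(w_u-w_v)$, and estimate each difference in $H^{-s+\frac12}_x$. The first difference is handled by Lemma~\ref{lipschitz} directly. For the other two I would run every multilinear estimate in its \emph{asymmetric} form: since each difference is itself multilinear in $u,v$, I place the single difference factor in the smoother norm $H^{-s+\frac12}_x$ (resp.\ $X_T^{-s+\frac12,\frac12+\delta}$) while the remaining factors are measured in the rough norm $H^{-s}\leq N$. The main obstacle lies precisely here: one must verify that all of these estimates survive this Lipschitz (one-factor) redistribution of regularity — one derivative's worth of difference in $H^{-s+\frac12}$, the rest, possibly rough (including copies of $R^*[u_0]$), bounded only by $N$ — with a constant depending on $N,T,\delta$ alone. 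The delicacy concentrates in the near-resonant region that caps the smoothing at a half derivative and in the coupling of the rough profile $R^*[u_0]$ into the equation for $w$; checking that the difference estimates remain uniform there as $s\uparrow\frac12$ is the crux of the argument.
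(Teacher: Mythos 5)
Your proposal is correct and follows essentially the same route as the paper: your differentiation by parts in the interaction representation is exactly the paper's normal form operator $T$ (whose boundary term is $h$, estimated as in Lemma~\ref{tpmap1}), your splitting of the cubic term by the resonance factor $(k_1+k_2)(k_2+k_3)(k_3+k_1)$ and your ``freezing'' of the surviving diagonal phase modulation reproduce the paper's exact resonant solution $R[f]$ of \eqref{defr} (whose exactness is what cancels the otherwise uncontrollable pure $|\widehat{R[f]}|^2\widehat{R[f]}$ term, leaving the trilinear form $B$ in \eqref{eqwper}), and your bootstrap for $w$ with $T\sim\|u_0\|_{H^{-s}}^{-\alpha}$ matches the paper's contraction via the Duhamel map \eqref{duhamelper} using Lemmas~\ref{nonres} and \ref{res}. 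The Lipschitz argument likewise coincides, with the paper handling the $h$-difference even more cheaply than your asymmetric scheme requires, since $T$ smooths and only the rough $L^2$-difference of the data is needed there.
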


We remark that Theorem~\ref{thm2} implies $u-R^*[f] \in C_t^0([0,T]; H^{-s+\frac{1}{2}}_x)$ for $s>1/2$, when we consider the embedding $X^{s,\frac{1}{2}+\delta} \subset C_0^t([0,T]; H^{s}_x)$ for any $\delta>0$ and $s\in \mathbf{R}$.\\

The paper is organized as follows. In Section \ref{s2}, we introduce the $X^{s,b}$ spaces and known results concerning these spaces.  Section~\ref{s3} contains the proof of Theorem~\ref{thm2} in the following manner: In Section~\ref{s31}, we construct the normal form and perform a few change of variables to reach the new formulation of the equation \eqref{kdvt} for our purpose.  In \ref{s32}, we obtain the necessary bilinear and trilinear estimates for the contraction argument. In Section \ref{s33}, we conclude the proof of the theorem.\\

{\bf Acknowledgement:} The author thanks Atanas Stefanov and Vladimir Georgiev for helpful suggestions.

\section{Notations and preliminaries}\label{s2}
\subsection{Notations}\label{s21}
We adopt the standard notations in approximate inequalities as follows:
By $A \lesssim B$, we mean that there exists an absolute constant $C>0$ with $A \leq B$.  $A \ll B$ means that the implicit constant is taken to be a \emph{sufficiently} large positive number.  For any number of quantities $\alpha_1, \ldots, \alpha_k$, $A\lesssim_{\alpha_1, \ldots, \alpha_k} B$ means that the implicit constant depends only on $\alpha_1, \ldots, \alpha_k$.
Finally, by $A\sim B$, we mean $A\lesssim B$ and $B\lesssim B$.\\

We indicate by $\eta$ a smooth time cut-off function which is supported on $[-2,2]$ and equals $1$ on $[-1,1]$.  Notations here will be relaxed, since the exact expression of $\eta$ will not influence the outcome.  For any normed space~$\mathcal{Y}$, we denote the norm $\mathcal{Y}_T$ by the expression $\|u\|_{\mathcal{Y}_T} = \|\eta(t/T)u \|_{\mathcal{Y}}$.\\

The spatial, space-time Fourier transforms and spatial inverse Fourier transform are
\begin{align*}
 \widehat{f}(\xi) &= \int_{\mathbf{T}} f(x) e^{-i x\xi}\, dx, \\
 \widetilde{u}(\tau,\xi) &= \int_{ \mathbf{T}\times\mathbf{R}} u(t,x) e^{-i(x\xi+ t\tau)} \,dx\, dt\\
 \mathcal{F}_{\xi}^{-1}[a_{\xi}](x) &= \frac{1}{2\pi} \sum_{\xi \in \mathbf{Z}} a_{\xi} e^{i\xi x}
\end{align*}
where $\xi \in \mathbf{Z}$.  For a reasonable expression~$\sigma$, we denote pseudo-differential operators with symbol $\sigma (\xi)$ via $\sigma (\nabla) f = \sigma(\p_x) f := \mathcal{F}_{\xi}^{-1} [ \sigma(i\xi) \widehat{f}(\xi)]$.  Also, we define $\langle \xi \rangle := 1+|\xi|$.\\ 

\subsection{$X^{s,b}$ spaces and  and local-wellposedness theory.}
 \label{s22}
Bourgain spaces are constructed as the completion of \emph{smooth} functions with with respect to the norm
$$
\|u\|_{X^{s,b}} := \left( \sum_{\xi\in \mathbf{Z}} (1+ |\xi|)^{2s} (1+ |\tau-\xi^3|)^{2b} |\widetilde{u}(\tau,\xi)|^2 \, d\tau \right)^{\frac{1}{2}}.
$$

 The expression above shows $X^{s,0} = L^2_t H^s_x$.  The added weight~$\tau-\xi^3$ for these spaces naturally related to the linear Airy operator~$\p_t+\p_{xxx}$.  For instance, the free Airy solution with $L^2$~initial data lies in this space, given an appropriate time cut-off~$\eta \in \mathcal{S}_t$.
\begin{equation}\label{airyfree}
\| \eta (t) e^{-t\p_x^3}f\|_{X^{0,b}} = \|(1+| \tau-\xi^3|)^{b} \widehat{\eta}(\tau-\xi^3) \widehat{f}(\xi)\|_{L^2_{\tau} l^2_{\xi}} \lesssim_{\eta, b} \|f\|_{L^2_x}
\end{equation} 
due to the fast decay of $\widehat{\eta} \in \mathcal{S}_t$.  For $\varepsilon, \delta>0$, we have the next two embedding properties (refer \cite{tao2} and Bourgain \cite{Bour}),
\begin{align}
\|u\|_{C^0_t H^s_x (\mathbf{R}\times \mathbf{T})} &\lesssim_{\delta} \|u\|_{X^{s,\frac{1}{2}+\delta}},\label{xsb1}\\
\|u\|_{L^6_{t,x}(\mathbf{R}\times \mathbf{T})} &\lesssim_{\varepsilon, \delta} \|u\|_{X^{\varepsilon,\frac{1}{2}+\delta}}.\label{xsb2}
\end{align}

The following two estimates provide a convenient framework in our proof.  The proofs in \cite[Prop. 2.12, Lemma 2.11]{tao2}, which argues for the non-periodic case ($x\in \mathbf{R}^d$),  can be easily adapted for the periodic case (see, for example \cite[Lemma 7.2]{I1}).  
\begin{proposition}\label{xsb}
$$
\|\eta (t) \int_{0}^t e^{-(t-s)\p_x^3} F(s) \, ds\|_{X^{s,\frac{1}{2}+\delta}} \lesssim_{\eta,\delta} \|u_0\|_{H^s} + \|F\|_{X^{s,-\frac{1}{2}+\delta}}).
$$
\end{proposition}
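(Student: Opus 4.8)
The plan is to establish the substantive content of the proposition, namely the inhomogeneous (Duhamel) bound
\[
\Big\|\eta(t)\int_{0}^t e^{-(t-s)\p_x^3}F(s)\,ds\Big\|_{X^{s,\frac12+\delta}}\lesssim_{\eta,\delta}\|F\|_{X^{s,-\frac12+\delta}}.
\]
The term $\|u_0\|_{H^s}$ appearing on the right only enlarges the bound and is harmless; it is the contribution of the free evolution, already controlled by \eqref{airyfree}, and does not affect the Duhamel term on the left. First I would reduce to $s=0$: since $e^{-t\p_x^3}$, the multiplier $\langle\tau-\xi^3\rangle^b$ defining the $X^{s,b}$ weight, and the spatial multiplier $\langle\nabla\rangle^s$ are all Fourier multipliers in $\xi$, they commute, and applying $\langle\nabla\rangle^s$ converts the claim into the $s=0$ statement with $F$ replaced by $\langle\nabla\rangle^sF$.

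Next I would conjugate out the dispersion. Setting $v(t,x)=e^{t\p_x^3}u(t,x)$ for $u$ the Duhamel term, the defining weight $\langle\tau-\xi^3\rangle$ is converted into $\langle\tau\rangle$, so that $\|u\|_{X^{0,b}}=\|\langle\tau\rangle^b\widetilde v\|_{L^2_\tau \ell^2_\xi}$, while a direct computation on the spatial Fourier side gives $\widehat v(t,\xi)=\eta(t)\int_0^t g_\xi(s)\,ds$ with $g_\xi(s)=e^{-is\xi^3}\widehat F(s,\xi)$ and $\sum_\xi\|g_\xi\|_{H^{b-1}_t}^2=\|F\|_{X^{0,b-1}}^2$. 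Hence, once I prove the purely temporal estimate
\[
\Big\|\eta(t)\int_0^t g(s)\,ds\Big\|_{H^b_t}\lesssim_{\eta,b}\|g\|_{H^{b-1}_t},\qquad b=\tfrac12+\delta,
\]
with a constant independent of the parameter, I can apply it for each fixed $\xi$ and sum in $\ell^2_\xi$ (after the reduction to $s=0$ there is no $\xi$-weight to track), recovering the full bound.

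It remains to prove this one-dimensional lemma, which is the heart of the matter. I would use the representation $\int_0^t g=c\int_{\mathbf{R}}\frac{e^{it\tau}-1}{i\tau}\,\widehat g(\tau)\,d\tau$ and split the $\tau$-integral into $|\tau|\ge 1$ and $|\tau|<1$. On $|\tau|\ge 1$ I split $e^{it\tau}-1$ into its two summands: the $e^{it\tau}$ part carries the gain $\tau^{-1}$, so after multiplication by the Schwartz cutoff $\eta$ (bounded on $H^b_t$) it contributes $\lesssim\|\tau^{-1}\widehat g\,\mathbf{1}_{|\tau|\ge1}\|_{H^b_t}\lesssim\|g\|_{H^{b-1}_t}$, whereas the $-1$ part produces a constant multiple of $\eta(t)$ whose coefficient is estimated by Cauchy–Schwarz. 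On $|\tau|<1$ the kernel $\frac{e^{it\tau}-1}{i\tau}$ is smooth and bounded; expanding it in a Taylor series in $t$ reduces the contribution to Schwartz multiples of $\eta(t)t^k$ with coefficients controlled by Cauchy–Schwarz and summable against $1/k!$.

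The main obstacle, and the only place where the hypothesis $\delta>0$ (that is, $b>\frac12$) is genuinely used, is the constant term arising from $-1$ on the region $|\tau|\ge1$: bounding its coefficient requires the $L^2_\tau$ integral $\int_{|\tau|\ge1}\langle\tau\rangle^{-2b}\tau^{-2}\,d\tau$ to converge, which happens precisely when $2b>1$. This is the familiar mechanism by which the endpoint $b=\frac12$ fails, and it is why one must work strictly above it.
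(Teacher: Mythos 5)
Your argument is correct and is essentially the paper's own proof: the paper simply cites \cite[Prop.~2.12, Lemma~2.11]{tao2} (adapted to the periodic case as in \cite{I1}), and your steps --- reduction to $s=0$, conjugation by the Airy group to convert $\langle \tau-\xi^3\rangle$ into $\langle\tau\rangle$ fiberwise in $\xi$, and the one-dimensional estimate $\|\eta(t)\int_0^t g\|_{H^b_t}\lesssim_{\eta,b}\|g\|_{H^{b-1}_t}$ via the splitting of $(e^{it\tau}-1)/(i\tau)$ over $|\tau|\ge 1$ and $|\tau|<1$ --- are exactly the proof of those cited results. One transcription slip worth fixing: after Cauchy--Schwarz against $\|\langle\tau\rangle^{b-1}\widehat{g}\|_{L^2_\tau}$, the borderline integral for the $-1$ term is $\int_{|\tau|\ge 1}\langle\tau\rangle^{2(1-b)}\tau^{-2}\,d\tau \sim \int_{|\tau|\ge 1}|\tau|^{-2b}\,d\tau$, not $\int_{|\tau|\ge 1}\langle\tau\rangle^{-2b}\tau^{-2}\,d\tau$, which is what actually matches the condition $2b>1$ you correctly state.
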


\begin{proposition} \label{timeloc}
Let $\eta \in \mathcal{S}_t (\mathbf{R})$ and $T\in (0,1)$.  Then for $-\frac{1}{2}< b' \leq b <\frac{1}{2}$, $s\in \mathbf{R}$ and $u\in \mathcal{S}_{t,x}$,
$$
\|\eta(\frac{t}{T}) u \|_{X^{s,b'}} \lesssim_{\eta, b,b'} T^{b-b'} \|u\|_{X^{s,b}}.
$$
\end{proposition}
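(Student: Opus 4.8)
The plan is to reduce the statement to a one-dimensional multiplier estimate in the time variable, which I would then prove by interpolation and duality, following the mechanism of \cite[Lemma 2.11]{tao2}.

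First I would conjugate away the dispersion. Since $\widetilde{e^{t\p_x^3}u}(\tau,\xi)=\widetilde u(\tau+\xi^3,\xi)$, the substitution $\rho=\tau+\xi^3$ gives the isometry $\|u\|_{X^{s,b}}=\|e^{t\p_x^3}u\|_{H^b_tH^s_x}$ for every $b\in\mathbf{R}$. The cut-off $\eta(t/T)$ is multiplication by a function of $t$ alone, hence it commutes with the (time-dependent, purely spatial) Fourier multiplier $e^{t\p_x^3}$; writing $v=e^{t\p_x^3}u$ the claim becomes $\|\eta(t/T)v\|_{H^{b'}_tH^s_x}\lesssim T^{b-b'}\|v\|_{H^b_tH^s_x}$. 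As $\eta(t/T)$ acts trivially in the spatial variable, the weight $\langle\xi\rangle^{s}$ and the $\ell^2_\xi$-sum pass through untouched, so it suffices to prove the scalar estimate
\[
\|\eta(t/T)g\|_{H^{b'}(\mathbf{R})}\lesssim_{\eta,b,b'}T^{b-b'}\|g\|_{H^{b}(\mathbf{R})},\qquad -\tfrac12<b'\le b<\tfrac12,
\]
with a constant independent of the suppressed frequency $\xi$; summing against $\langle\xi\rangle^{2s}$ recovers the proposition (equivalently, one runs the scalar argument verbatim for $H^s_x$-valued functions of $t$).

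For the range $0\le b'\le b<\tfrac12$ I would interpolate between two endpoint bounds for the linear operator $M_T:g\mapsto\eta(t/T)g$. The first is the uniform multiplier bound $\|M_Tg\|_{H^b}\lesssim_{\eta,b}\|g\|_{H^b}$, valid for $0\le b<\tfrac12$ with constant independent of $T\in(0,1)$; the second is the restriction/trace gain $\|M_Tg\|_{L^2}\lesssim_{\eta,b}T^{b}\|g\|_{H^b}$, which follows from $\int_{|t|\lesssim T}|g|^2\,dt\lesssim T^{2b}\|g\|_{H^b}^2$ since $|\eta(t/T)|$ is, up to rapidly decaying tails, the indicator of $\{|t|\lesssim T\}$. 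Complex interpolation of the target spaces, using $[H^b,L^2]_\theta=H^{(1-\theta)b}$, then yields $\|M_Tg\|_{H^{(1-\theta)b}}\lesssim T^{\theta b}\|g\|_{H^b}$, and choosing $\theta=(b-b')/b$ gives exactly $\|M_Tg\|_{H^{b'}}\lesssim T^{b-b'}\|g\|_{H^b}$. The remaining indices follow formally. For $-\tfrac12<b'\le b\le 0$ I would dualize: $\|M_Tg\|_{H^{b'}}=\sup_{\|\phi\|_{H^{-b'}}\le1}|\langle g,\overline{\eta(t/T)}\phi\rangle|\le\|g\|_{H^b}\sup_{\|\phi\|_{H^{-b'}}\le1}\|\overline{\eta(t/T)}\phi\|_{H^{-b}}$, and since $0\le -b\le -b'<\tfrac12$ the case already proved (applied to the conjugate cut-off, which lies in the same class) bounds $\|\overline{\eta(t/T)}\phi\|_{H^{-b}}\lesssim T^{(-b')-(-b)}\|\phi\|_{H^{-b'}}=T^{b-b'}\|\phi\|_{H^{-b'}}$. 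The mixed case $b'<0<b$ is obtained by composing the two: picking $\widetilde\eta\in\mathcal{S}$ with $\widetilde\eta\equiv1$ on $\operatorname{supp}\eta$ so that $\eta(t/T)=\eta(t/T)\widetilde\eta(t/T)$, the dual estimate with $b=0$ gives $\|\eta(t/T)(\widetilde\eta(t/T)g)\|_{H^{b'}}\lesssim T^{-b'}\|\widetilde\eta(t/T)g\|_{L^2}$, while the $L^2$-gain gives $\|\widetilde\eta(t/T)g\|_{L^2}\lesssim T^{b}\|g\|_{H^b}$, so the product is $\lesssim T^{b-b'}\|g\|_{H^b}$.

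I expect the main obstacle to be the two scalar endpoint estimates, and in particular the role of the constraint $|b|,|b'|<\tfrac12$. Both the multiplier bound and the trace gain degenerate as the index approaches $\tfrac12$ (a smooth time cut-off is not a bounded pointwise multiplier on $H^{1/2}$, and the $L^2$ restriction to an interval of length $T$ ceases to gain at $b=\tfrac12$), so the open interval $(-\tfrac12,\tfrac12)$ is precisely where the argument operates and where the careful bookkeeping — separating the region where the difference quotient of $\eta(t/T)$ is large (of measure only $\sim T$) from its tail — is required.
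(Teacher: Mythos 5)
Your proposal is correct, and it runs along essentially the same lines as the argument the paper relies on: the paper gives no proof of its own but defers to \cite[Lemma 2.11]{tao2}, whose mechanism is exactly your reduction --- conjugate by the Airy group to trade the weight $\langle\tau-\xi^3\rangle$ for $\langle\tau\rangle$, reduce to a scalar $H^b_t$ estimate with constant uniform in $\xi$, and exploit $|b|,|b'|<\tfrac12$ at the endpoints, with duality and composition handling the negative and mixed index ranges. Two points in your write-up need attention before the proof is complete. First, your interpolation endpoint $\|\eta(t/T)g\|_{H^b}\lesssim\|g\|_{H^b}$, uniformly in $T\in(0,1)$, is precisely the diagonal case $b'=b$ of the proposition being proved, so quoting it as a known ``uniform multiplier bound'' is circular unless you give it an independent proof; one is available, e.g.\ via the fractional Leibniz rule together with the scale invariance $\| |\nabla|^b \eta(\cdot/T)\|_{L^{1/b}_t}=\| |\nabla|^b \eta\|_{L^{1/b}_t}$ and the embedding $H^b\hookrightarrow L^{2/(1-2b)}$, both of which use $0\le b<\tfrac12$. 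Likewise the restriction gain $\|g\|_{L^2(|t|\le T)}\lesssim T^{b}\|g\|_{H^b}$ is the real quantitative content and should be proved rather than asserted: split $g$ at time-frequency $1/T$; the high part gives $\|g_{\ge 1/T}\|_{L^2}\lesssim T^b\|g\|_{H^b}$ trivially, while $\|g_{<1/T}\|_{L^2(|t|\le T)}\le T^{1/2}\|\widehat{g_{<1/T}}\|_{L^1}\lesssim T^{1/2}\cdot T^{b-\frac12}\|g\|_{H^b}$ by Cauchy--Schwarz, again using $b<\tfrac12$. This frequency splitting is the computation at the heart of the cited proof (Tao reduces all cases to $b'=0\le b$ by the same composition/duality moves you use and then runs exactly this splitting), and it also rescues your Gagliardo-seminorm heuristic for the diagonal case, where the naive estimate of the cross term only yields $T^{-b}\|g\|_{L^2}$ unless the restriction gain is fed back in on the region $|s|\lesssim T$.

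Second, your factorization $\eta(t/T)=\eta(t/T)\,\widetilde\eta(t/T)$ with $\widetilde\eta\equiv1$ on $\operatorname{supp}\eta$ is impossible for a general Schwartz $\eta$, as the statement permits: a Schwartz function need not have compact support, and $\widetilde\eta\equiv1$ on all of $\mathbf{R}$ is not Schwartz. Either invoke the standard fact that every Schwartz function factors as a product of two Schwartz functions, or note that the paper's cut-off is supported in $[-2,2]$, where your choice works verbatim. Your insistence on composition for the mixed case $b'<0<b$, rather than interpolation, is the right move and is not removable: there $b-b'$ may approach $1$, whereas any interpolation between the two one-signed regimes can only produce exponents of $T$ strictly below $\tfrac12$.
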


Using Proposition~\ref{timeloc}, a small positive power of $T$ can be produced by yielding a small portion of the Airy-frequency weight.  We will not explicitly state this gain in effort to simplify notations, but we will assume in the sequel that the implicit constants depending on $T$ will be of \emph{positive} power, unless otherwise stated.\\

To work with functions with mean-zero, we define a closed subspace $Y^{s,b}$ of $X^{s,b}$ (with the same norm) as the image of orthogonal projection $\mathbf{P}: X^{s,b} \to Y^{s,b}$ defined by $\displaystyle \mathbf{P} (u) (x) := u(x) - \int_{\mathbf{T}} u\, dx$.\\

We briefly review the main result of \cite{KPV1} for the periodic KdV.  Kenig, Ponce, Vega proved the bilinear estimate

\begin{equation}\label{kpvper}
\| \p_x (u v) \|_{Y^{-s,-\frac{1}{2}+\delta}}\lesssim_{s,\delta} \|u\|_{Y^{-s,\frac{1}{2}+\delta}} \|v\|_{Y^{-s,\frac{1}{2}+\delta}}
\end{equation}
for $0\leq s <1/2$ and $\delta>0$ small.  Since we are concerned here with local-in-time solution, we characterize the solution~$u$  of \eqref{kdvt} over the time interval~$[0,T]$ by the identity,
\begin{equation}\label{equ}
u = \eta(t) e^{-t\p_x^3} u_0 + \eta(\frac{t}{T}) \int_0^t e^{-(t-s)\p_x^3} \p_x (u^2)(s) \, ds.
\end{equation}

Then Proposition~\ref{xsb} and \ref{timeloc} gives that there exists $\alpha>0$ large so that, for $T\sim \|u_0\|_{H^{-s}}^{-\alpha}$, the contraction argument will hold on a \emph{small} ball in $Y^{-s,\frac{1}{2}+\delta}$ centered at $\eta(t) e^{-t\p_x^3} u_0$ .  In particular, we have $\mathbf{P}u = u$ and $\|u\|_{Y^{-s,\frac{1}{2}+\delta}} \sim \|u_0\|_{H^{-s}}$.\\

Furthermore, if we assume $u$ to be real-valued, then we have $\widehat{u}(-\xi) = \overline{\widehat{u}} (\xi)$.  We will assume this relation in our proof.

\section{Proof of Theorem \ref{thm2}}
\label{s3}

\subsection{Setting of the problem}
\label{s31}
We now turn our attention to \eqref{kdvt}.  Let $u$ be the local solution of \eqref{kdvt} as characterized in \eqref{equ}.  Setting $v := \langle \nabla \rangle^s u$, we have 
\begin{equation}\label{periodic}
v_t + v_{xxx} = \mathcal{N} (v,v), \qquad v(0) = f \in L^2(\mathbf{T})
\end{equation}
where we assume $\int_{\mathbf{T}} f \, dx= 0$ and $\mathcal{N} (u,v) := \p_x \langle \nabla \rangle^{-s} [\langle \nabla \rangle^s u \langle \nabla \rangle^s v ]$.  In particular, the bilinear operator~$\mathcal{N}$ contains a spatial derivative, thus $\mathcal{N} \equiv \mathbf{P} \circ \mathcal{N}$ itself has mean-zero.\\

We construct the bilinear pseudo-differential operator $T$ by the formula
$$
T (u,v) := \sum_{\xi_1 \xi_2 (\xi_1+\xi_2) \neq 0} \frac{\langle\xi_1\rangle^s \langle\xi_2\rangle^s}{\langle\xi_1 + \xi_2\rangle^s} \frac{1}{\xi_1 \xi_2} \widehat{u}(\xi_1) \widehat{v} (\xi_2) e^{i(\xi_1 + \xi_2) x}.
$$

Then the Airy operator acts on $T$ in the following manner:

$$
(\p_t + \p_{xxx}) T (u,v) = T ((\p_t + \p_{xxx}) u,v) + T(u,(\p_t+ \p_{xxx}) v) + \mathcal{N} (\mathbf{P}u,\mathbf{P}v).
$$
If we write $h= T (v,v)$ where $v$ solves \eqref{periodic} (recall $v=\mathbf{P}v$) and change variable by $v= h+z$, then $z$ satisfies
\begin{equation}\label{zperiodic}
\left| \begin{array}{l} (\p_t + \p_{xxx}) z = -2 T (\mathcal{N}(v,v), v);\\
	z(0) = f-T(f,f). \end{array}\right.
\end{equation}

For the right side of \eqref{zperiodic}, we note that 
$$
T(\mathcal{N}(v,v),v) = \mathbf{P}\langle \nabla \rangle^{-s} (\mathbf{P}[\langle \nabla \rangle^s v \langle \nabla \rangle^s v] \frac{\langle \nabla \rangle^{s}}{\nabla} v).
$$
  
We adapt the computations in \cite{NTT2} to simplify Fourier coefficients of the above expression as follows.  For $\xi\neq 0$ (recall $\widehat{v}(0)=0$ and $\widehat{v}(-\xi) = \overline{\widehat{v}}(\xi)$),

\begin{align*} 
\mathcal{F}[\mathbf{P}\langle \nabla \rangle^{-s} (\mathbf{P}[\langle \nabla \rangle^s v \langle \nabla \rangle^s v] \frac{\langle \nabla \rangle^{s}}{\nabla} v)](\xi) &= \sum_{\tiny \begin{array}{c}\xi_1+\xi_2\neq 0, \quad\xi_3 \neq 0\\ \xi_1+\xi_2+\xi_3 = \xi\end{array}} \frac{\langle \xi_1\rangle^s \langle\xi_2\rangle^s \langle \xi_3\rangle^{s}}{i\xi_3\langle \xi\rangle^s} \widehat{v}(\xi_1) \widehat{v}(\xi_2) \widehat{v}(\xi_3)\\
	&\hspace{-150pt}= \sum_{\tiny \begin{array}{c}(\xi_1+\xi_2)(\xi_2+\xi_3)(\xi_3+\xi_1)\neq 0\\ \xi_1+\xi_2+\xi_3=\xi, \quad\xi_3 \neq 0 \end{array} } \frac{\langle \xi_1\rangle^s \langle\xi_2\rangle^s \langle \xi_3\rangle^{s}}{i\xi_3\langle \xi\rangle^s} \widehat{v}(\xi_1) \widehat{v}(\xi_2) \widehat{v}(\xi_3) + \frac{\langle \xi\rangle^{2s}}{-i\xi} \widehat{v}(\xi) \widehat{v}(\xi) \widehat{v}(-\xi)\\
	&\hspace{-105pt}+ \sum_{\xi_3\neq 0} \frac{\langle\xi\rangle^s \langle \xi_3\rangle^{2s}\langle \xi\rangle^s}{i\xi_3\langle \xi\rangle^s} \widehat{v}(-\xi_3) \widehat{v}(\xi) \widehat{v}(\xi_3)+ \sum_{\xi_3\neq 0} \frac{\langle \xi\rangle^s \langle\xi_3\rangle^{2s}}{i\xi_3\langle \xi\rangle^s} \widehat{v}(\xi) \widehat{v}(-\xi_3) \widehat{v}(\xi_3)\\
	&\hspace{-150pt}= \sum_{\tiny \begin{array}{c}(\xi_1+\xi_2)(\xi_2+\xi_3)(\xi_3+\xi_1)\neq 0\\ \xi_1+\xi_2+\xi_3=\xi, \quad\xi_3 \neq 0 \end{array} } \frac{\langle \xi_1\rangle^s \langle\xi_2\rangle^s \langle \xi_3\rangle^{s}}{i\xi_3\langle \xi\rangle^s} \widehat{v}(\xi_1) \widehat{v}(\xi_2) \widehat{v}(\xi_3) - \frac{\langle \xi\rangle^{2s}}{i\xi} |\widehat{v}|^2(\xi) \widehat{v}(\xi).
\end{align*}

We say that the first term on the right side of above is \emph{non-resonant} and denoted $\mathcal{NR}(\xi)$, and the second one is \emph{resonant} and denoted $\mathcal{R}(\xi)$.  Then we can rewrite \eqref{zperiodic} as 
$$
(\p_t + \p_{xxx}) z = -2[ \mathcal{F}_{\xi}^{-1} (\mathcal{NR}) + \mathcal{F}_{\xi}^{-1} \mathcal{R} ].
$$

To deal with the \emph{resonant} term, we construct a solution map for the IVP
$$
\left| \begin{array}{l}(\p_t+\p_{xxx})v_* = -2 \sum_{\xi \neq 0} \frac{\langle \xi \rangle^{2s}}{i\xi} |\widehat{v_*}(\xi)|^2 \widehat{v_*} (\xi) e^{i\xi x} \\
	v_*(0) = f \in L^2(\mathbf{T}). \end{array} \right.
$$

We accomplish this by constructing a solution through the map~$R: L^2 \to C_t^0 L^2_x$ defined
\begin{equation}\label{defr}
R [f] (t,x) := \sum_{\xi \neq 0} \widehat{f}(\xi) e^{2i\frac{\langle \xi \rangle^{2s}}{\xi} |\widehat{f}(\xi)|^2 t} e^{i(\xi x + \xi^3 t)}.
\end{equation}

We remark that $R^*[u_0]$ in the statement of Theorem~\ref{thm2} corresponds to $\langle \nabla \rangle^s R[\langle \nabla \rangle^{-s} u_0]$.  For constructions and properties of such solution maps, refer to \cite[Exercise 4.20-21]{tao2}.  We perform another change of variable~$z = R[f] + w$ to obtain the equation for $w$ (recall that now $v=R[f]+ h +w$),
\begin{equation}\label{eqwper}
\left| \begin{array}{l} (\p_t + \p_{xxx}) w = -2\mathcal{F}_{\xi}^{-1} (\mathcal{NR}) +2 \sum_{\xi\neq 0} \frac{\langle \xi \rangle^{2s}}{i\xi} B(\widehat{R[f]}(\xi), \widehat{h}(\xi), \widehat{w}(\xi)) e^{i\xi x}\\
	w(0) = -T(f,f) \in H^1(\mathbf{T})\end{array} \right.
\end{equation}
where $B(x,y,z):= |x+y+z|^2(y+z) + x|y+z|^2+ x^2 (\overline{y+z}) + |x|^2 (y+z)$ for $x,y,z \in \mathbf{C}$.  Heuristically $R[f]$ is the least smooth term among the three, so it is to our benefit that the particular tri-linear form in \eqref{eqwper} excludes the Fourier coefficients~$|\widehat{R[f]}|^2 \widehat{R[f]}$.

\subsection{Estimates for the non-linearities.}
\label{s32}
 In this section, we establish necessary estimates for the contraction argument of \eqref{eqwper} in $Y^{\frac{1}{2},\frac{1}{2}+\delta}$.\\  

First, we examine mapping properties of $T$.  The next lemma gives that $T(f,f) \in H^1(\mathbf{T})$ and also $h\in L^{\infty}_t H^1_x$ due to the embedding~\eqref{xsb1}.

\begin{lemma}\label{tpmap1}
$T: L^2(\mathbf{T}) \times L^2(\mathbf{T}) \to H^{1}(\mathbf{T})$ is a bounded bilinear operator.
\end{lemma}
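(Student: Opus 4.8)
The plan is to work entirely on the Fourier side and reduce the claimed bilinear bound to a single discrete summation estimate via a Cauchy--Schwarz (Schur-type) argument. Writing $\xi=\xi_1+\xi_2$ and reading off the Fourier coefficients of $T(u,v)$ from its defining formula, we have
$$
\widehat{T(u,v)}(\xi) = \frac{1}{\langle \xi\rangle^s}\sum_{\substack{\xi_1+\xi_2=\xi\\ \xi_1\xi_2\xi\neq 0}} \frac{\langle\xi_1\rangle^s\langle\xi_2\rangle^s}{\xi_1\xi_2}\,\widehat u(\xi_1)\widehat v(\xi_2)
$$
up to a fixed constant. Introducing the symbol $m(\xi_1,\xi_2):=\langle\xi\rangle^{1-s}\langle\xi_1\rangle^s\langle\xi_2\rangle^s/(\xi_1\xi_2)$, the square of the $H^1$-norm becomes $\|T(u,v)\|_{H^1}^2 \sim \sum_\xi\big|\sum_{\xi_1}m(\xi_1,\xi-\xi_1)\,\widehat u(\xi_1)\widehat v(\xi-\xi_1)\big|^2$. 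Applying Cauchy--Schwarz in the inner $\xi_1$-sum and then Fubini in the outer sum, this is controlled by $\big(\sup_\xi\sum_{\xi_1}|m(\xi_1,\xi-\xi_1)|^2\big)\,\|u\|_{L^2}^2\|v\|_{L^2}^2$. Thus the entire lemma reduces to the uniform bound $\sup_\xi\sum_{\xi_1}|m(\xi_1,\xi-\xi_1)|^2<\infty$.

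To establish this I would estimate the symbol pointwise, distributing the weight $\langle\xi\rangle^{1-s}$ with care. Since all three frequencies are nonzero we have $|\xi_i|\ge1$ and $\langle\xi_i\rangle\sim|\xi_i|$. As $|m|^2$ is symmetric under $\xi_1\leftrightarrow\xi_2$, it suffices to treat the region $\langle\xi_1\rangle\ge\langle\xi_2\rangle$, on which $\langle\xi\rangle\le 2\langle\xi_1\rangle$. There,
$$
|m(\xi_1,\xi_2)|^2\lesssim \frac{\langle\xi_1\rangle^{2-2s}\,\langle\xi_1\rangle^{2s}\langle\xi_2\rangle^{2s}}{\xi_1^2\,\xi_2^2} = \frac{\langle\xi_1\rangle^2}{\xi_1^2}\cdot\frac{\langle\xi_2\rangle^{2s}}{\xi_2^2}\lesssim |\xi_2|^{2s-2},
$$
using $\langle\xi_1\rangle^2\sim\xi_1^2$. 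With $\xi$ fixed, as $\xi_1$ ranges over the integers $\xi_2=\xi-\xi_1$ does as well, so summing yields $\sum_{\xi_1}|m|^2\lesssim\sum_{\xi_2\neq0}|\xi_2|^{2s-2}$, a bound independent of $\xi$.

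The key point — and the only place the hypothesis $s<1/2$ is used — is that this last series converges precisely because $2s-2<-1$. I expect the main obstacle to be exactly this bookkeeping of the derivative weight: the naive estimate $\langle\xi\rangle^{1-s}\le(\langle\xi_1\rangle\langle\xi_2\rangle)^{1-s}$ would consume all of the decay furnished by $1/(\xi_1\xi_2)$ and leave the non-summable quantity $\sup_\xi\sum_{\xi_1}1$, so it is essential to charge the full factor $\langle\xi\rangle^{1-s}$ to the larger frequency alone and retain the gain $|\xi_{\min}|^{2s-2}$ from the smaller one. Once the pointwise bound is organized this way the summation is routine, and combining it with the Cauchy--Schwarz reduction gives $\|T(u,v)\|_{H^1}\lesssim_s\|u\|_{L^2}\|v\|_{L^2}$, which is the asserted boundedness.
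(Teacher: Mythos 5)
Your proof is correct, and it closes the estimate by a different mechanism than the paper, though the two arguments share the same essential combinatorial step: by the $\xi_1\leftrightarrow\xi_2$ symmetry, the full output weight $\langle\xi\rangle^{1-s}$ is charged to the dominant input frequency, so that the surviving factor $|\xi_{\min}|^{2s-2}$ on the smaller frequency is summable precisely when $s<1/2$ --- your closing observation correctly identifies this as the crux. Where you finish with a purely Fourier-side Schur-type test (Cauchy--Schwarz in the inner $\xi_1$-sum against $\sup_\xi\sum_{\xi_1}|m|^2$, then Fubini), the paper instead extracts the supremum of a reduced multiplier
$M = \sup \langle\xi_1\rangle^s\langle\xi-\xi_1\rangle^s\langle\xi\rangle^{1-s}\big/\bigl(|\xi_1|\,|\xi-\xi_1|^{\frac{1}{2}-\varepsilon}\bigr)$,
deliberately keeping a factor $|\xi-\xi_1|^{-\frac{1}{2}-\varepsilon}$ attached to the low frequency, and then returns to physical space via Plancherel, H\"older ($L^2\times L^\infty$), and the Sobolev embedding $H^{\frac{1}{2}+\varepsilon}(\mathbf{T})\hookrightarrow L^\infty(\mathbf{T})$. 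Your route is more elementary and self-contained --- no $\varepsilon$-splitting, no function-space embedding --- and it makes the blow-up of the constant as $s\to 1/2$ explicit, since $\sum_{n\neq 0}|n|^{2s-2}\sim (1-2s)^{-1}$. The paper's version has the stylistic virtue of matching the template reused for the trilinear estimate in Lemma~\ref{nonres}, where the analogous multiplier $M'$ is pulled out in exactly the same way and the $L^6$ Strichartz embedding plays the role that $H^{\frac{1}{2}+\varepsilon}\hookrightarrow L^\infty$ plays here; that multiplier-extraction scheme also remains usable in situations where the symbol itself is not square-summable along the convolution fiber, which is when a pure Schur test would break down. One cosmetic remark: the first display of the paper's proof is labeled $\|T(u,v)\|_{H^{1/2}}$, but the weight $\langle\xi\rangle^{1-s}$ appearing in it shows that, like you, the paper is in fact computing the $H^1$ norm claimed in the lemma.
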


\begin{proof}
Let $u,v\in C^{\infty}(\mathbf{T})$.  Then
\begin{equation}\label{tpeq}
\| T(u,v)\|_{H^{\frac{1}{2}}} \sim \| \sum_{\xi_1 (\xi-\xi_1)\neq 0} \frac{\langle\xi_1\rangle^s \langle\xi-\xi_1\rangle^s \langle\xi\rangle^{1-s} }{\xi_1 (\xi-\xi_1)} \widehat{u}(\xi_1) \widehat{v}(\xi-\xi_1)\|_{l^2_{\xi}(\mathbf{Z}\setminus \{0\}) }.
\end{equation}

By symmetry, we can assume $|\xi_1| \geq |\xi-\xi_1|$.  Then by H\"older and Sobolev embedding,
\begin{align*}
\eqref{tpeq} &\lesssim  M \|\sum_{\xi_1 \neq \xi} |\widehat{u}|(\xi_1) \frac{|\widehat{v}|(\xi-\xi_1)}{|\xi-\xi_1|^{\frac{1}{2}+\varepsilon}}\|_{l^2_{\xi}(\mathbf{Z}) }\\
	 &\lesssim M \|\mathcal{F}^{-1}[ |\widehat{u}|] |\p_x |^{-\frac{1}{2}-\varepsilon} \mathcal{F}^{-1}[|\widehat{v}|]\|_{L^2_{x}(\mathbf{T})} \lesssim_{\varepsilon} M \| u\|_{L^2(\mathbf{T})} \| v \|_{L^2_{x}(\mathbf{T})}
\end{align*}
where
$$
M := \sup_{\xi \xi_1 (\xi-\xi_1)\neq 0}\frac{\langle\xi_1\rangle^s \langle\xi-\xi_1\rangle^s \langle\xi\rangle^{1-s} }{|\xi_1| |\xi-\xi_1|^{\frac{1}{2}-\varepsilon}}.
$$

It is easy to see that $M$ is a bounded quantity if $s<1/2$, thus the claim follows.
\end{proof}

We derive the necessary estimates for the non-resonant term in the next lemma.

\begin{lemma}\label{nonres}
For $v\in Y^{0,\frac{1}{2}+\delta}$ and $s<1/2$, we have
$$
\|\mathcal{F}_{\xi}^{-1}(\mathcal{NR}) \|_{Y_T^{\frac{1}{2},-\frac{1}{2}+\delta}} \lesssim_{\delta, T} \|v\|^3_{Y^{0,\frac{1}{2}+\delta}}.
$$
\end{lemma}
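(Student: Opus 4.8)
The plan is to establish the time-global trilinear bound
\[
\|\mathcal{F}_\xi^{-1}(\mathcal{NR})\|_{Y^{\frac12,-\frac12+\delta}}\lesssim_\delta \|v\|_{Y^{0,\frac12+\delta}}^3,
\]
and then recover the factor $T$ by reinstating the cut-off $\eta(t/T)$ and invoking Proposition~\ref{timeloc} (trading a sliver of the Airy weight, from $-\tfrac12+2\delta$ to $-\tfrac12+\delta$, for a positive power of $T$). Since all functions are mean-zero, every frequency below is nonzero. By duality it is enough to bound the $4$-linear form obtained by pairing $\mathcal{F}_\xi^{-1}(\mathcal{NR})$ against $g\in Y^{-\frac12,\frac12-\delta}$. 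Writing $\sigma=\tau-\xi^3$, $\sigma_j=\tau_j-\xi_j^3$ and normalizing $F_j=\langle\sigma_j\rangle^{\frac12+\delta}|\widetilde v(\tau_j,\xi_j)|$, $H=\langle\xi\rangle^{-\frac12}\langle\sigma\rangle^{\frac12-\delta}|\widetilde g(\tau,\xi)|$ (so $\|F_j\|_2=\|v\|_{Y^{0,\frac12+\delta}}$ and $\|H\|_2=\|g\|_{Y^{-\frac12,\frac12-\delta}}$), the form is dominated by
\[
\sum_{\substack{\xi_1+\xi_2+\xi_3=\xi\\ \textup{non-res.}}}\ \int_{\tau_1+\tau_2+\tau_3=\tau} m\,F_1F_2F_3\,H,\qquad m=\frac{P}{\langle\sigma\rangle^{\frac12-\delta}\langle\sigma_1\rangle^{\frac12+\delta}\langle\sigma_2\rangle^{\frac12+\delta}\langle\sigma_3\rangle^{\frac12+\delta}},
\]
where the frequency symbol is
\[
P=\frac{\langle\xi_1\rangle^s\langle\xi_2\rangle^s\langle\xi_3\rangle^s}{|\xi_3|\,\langle\xi\rangle^s}\,\langle\xi\rangle^{\frac12}\lesssim \langle\xi_1\rangle^s\langle\xi_2\rangle^s\,\langle\xi_3\rangle^{s-1}\,\langle\xi\rangle^{\frac12-s}.
\]

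Two algebraic facts drive everything. First, on $\xi_1+\xi_2+\xi_3=\xi$ the resonance identity
\[
\xi^3-\xi_1^3-\xi_2^3-\xi_3^3=3(\xi_1+\xi_2)(\xi_2+\xi_3)(\xi_3+\xi_1)
\]
gives $\sigma-\sigma_1-\sigma_2-\sigma_3=-3(\xi_1+\xi_2)(\xi_2+\xi_3)(\xi_3+\xi_1)$, so that, with $L:=|\xi_1+\xi_2|\,|\xi_2+\xi_3|\,|\xi_3+\xi_1|\ge1$, one has $\max(\langle\sigma\rangle,\langle\sigma_1\rangle,\langle\sigma_2\rangle,\langle\sigma_3\rangle)\gtrsim L$. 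Second, and crucially, $P\lesssim L^{\frac12-\delta}$. The factor $\langle\xi_3\rangle^{s-1}\lesssim1$ produced by $1/|\xi_3|$ must be kept, not discarded: whenever a pair-sum $\xi_i+\xi_j$ is small the resonance is carried by the other two pair-sums and forces one frequency to be large, and it is exactly $\langle\xi_3\rangle^{s-1}$ (or $\langle\xi\rangle^{\frac12-s}$ when $\xi$ is small) that then supplies the decay. A short case analysis confirms the claim, the binding configuration being the high-high-to-low interaction $|\xi_1+\xi_2|\sim|\xi_3|\sim|\xi|\sim1$, $|\xi_1|\sim|\xi_2|\sim N$, where $P\sim N^{2s}$ and $L\sim N^2$; here $P\lesssim L^{\frac12-\delta}$ is equivalent to $s<\tfrac12-\delta$. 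This is precisely where the hypotheses $0\le s<\tfrac12$ and $\delta\ll\tfrac12-s$ are used.

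I then close by splitting on which modulation realizes $\max(\cdots)\gtrsim L$. If the output modulation dominates, $\langle\sigma\rangle\gtrsim L$, then $P\lesssim\langle\sigma\rangle^{\frac12-\delta}$ cancels the output weight and $m\lesssim\prod_j\langle\sigma_j\rangle^{-\frac12-\delta}$; undoing the normalization rewrites the form as $\int w_1w_2w_3\,\overline{g_0}$ with $w_j\in X^{0,\frac12+\delta}$ and $g_0\in L^2$, and Hölder together with the Strichartz embedding~\eqref{xsb2} bounds it by $\|w_1\|_{L^6}\|w_2\|_{L^6}\|w_3\|_{L^6}\|g_0\|_{L^2}$. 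The $\varepsilon$-derivative loss in~\eqref{xsb2} is harmless because the strict inequality $s<\tfrac12-\delta$ leaves room to insert $\langle\xi_1\rangle^\varepsilon\langle\xi_2\rangle^\varepsilon\langle\xi_3\rangle^\varepsilon$ while keeping $P\langle\xi_1\rangle^\varepsilon\langle\xi_2\rangle^\varepsilon\langle\xi_3\rangle^\varepsilon\lesssim L^{\frac12-\delta}$ for small $\varepsilon$.

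The main obstacle is the complementary case, where an input modulation dominates, say $\langle\sigma_3\rangle\gtrsim L$ while $\langle\sigma\rangle\ll L$: absorbing $P$ now depletes the Airy weight on the third factor, and one is left pairing a nearly weightless factor against the dual function $g$, which sits at regularity $\frac12-\delta$, just below the $L^6$ threshold of~\eqref{xsb2}. I would handle this by noting that the danger is confined to bounded frequency: when the output frequency $\xi$ is large there is slack in $P$ to spare a usable weight, and in the remaining regime the relevant mode is $O(1)$, so periodicity collapses the dual factor onto finitely many spatial frequencies, where $\|h\|_{L^6_{t,x}}\lesssim\|h\|_{X^{0,\frac12-\delta}}$ reduces to the one-dimensional embedding $H^{\frac12-\delta}_t\hookrightarrow L^6_t$ (valid for $\delta$ small) and recovers exactly the lost endpoint. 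Equivalently, one runs a bilinear $L^2$ argument—Cauchy–Schwarz in the internal variables followed by a count of lattice points on the resonant set—for the two factors carrying the dominant modulation, where once more the strict inequality $s<\tfrac12$ furnishes the needed margin. Reinstating $\eta(t/T)$ and applying Proposition~\ref{timeloc} then yields the stated $T$-dependence.
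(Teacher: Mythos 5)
Your first half coincides with the paper's proof: the dyadic modulation localization, the resonance identity forcing $\max_j(\langle\sigma\rangle,\langle\sigma_j\rangle)\gtrsim L:=|\xi_1+\xi_2||\xi_2+\xi_3||\xi_3+\xi_1|$, the symbol bound $P\lesssim L^{\frac12-\delta}$ with the high-high-low binding case using $s<\frac12$, the triple-$L^6$ argument via \eqref{xsb2} when the output modulation dominates, and the $T$-gain via Proposition~\ref{timeloc} are all exactly the paper's steps. The gap is in the input-modulation-dominant case, and it is genuine. Your claim that ``the danger is confined to bounded frequency'' is false: take $\xi_1=N+3$, $\xi_2=-N$, $\xi_3=N+1$, so $\xi=N+4$, the pair sums are $3$, $1$, $2N+4$, hence $L\sim N$ while $P\sim N^{2s-\frac12}$. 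Here all four frequencies, including the output, are $\sim N$, so no collapse onto finitely many spatial modes occurs, and after absorbing $P$ into $\langle\sigma_3\rangle^{\frac12-\delta}\gtrsim L^{\frac12-\delta}$ the leftover frequency slack is only $N^{1-2s-\delta}$, which degenerates as $s\to\frac12$. More fundamentally, what the dual factor lacks is \emph{modulation} weight, not spatial weight: $g$ carries exponent $\frac12-\delta$ while \eqref{xsb2} needs $\frac12+\delta$, and a frequency gain of size $N^{(1-2s)-\delta}$ cannot dominate the missing $\langle\sigma\rangle^{2\delta}$, since $\sigma$ is unbounded for fixed frequencies; an interpolated embedding $\|g\|_{L^6}\lesssim\|g\|_{X^{a,b}}$ with $b<\frac12$ would cost a fixed positive spatial index $a$ (roughly $\frac16$), unavailable for $s$ near $\frac12$. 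Your one-sentence lattice-counting fallback is not carried out, and ``the two factors carrying the dominant modulation'' does not parse here, since only one factor does.

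The repair is elementary and is precisely what the paper's duality computation (the one justifying ``substituting $L_1$ in place of $L$'' in \eqref{mpdef}) accomplishes: since $\langle\sigma\rangle\lesssim\langle\sigma_3\rangle$ in this case, borrow modulation from the dominant factor by writing $\langle\sigma\rangle^{-(\frac12-\delta)}\le\langle\sigma_3\rangle^{2\delta}\,\langle\sigma\rangle^{-(\frac12+\delta)}$, so that the multiplier becomes $\bigl(P/\langle\sigma_3\rangle^{\frac12-\delta}\bigr)\langle\sigma\rangle^{-(\frac12+\delta)}\langle\sigma_1\rangle^{-(\frac12+\delta)}\langle\sigma_2\rangle^{-(\frac12+\delta)}$, and $P/\langle\sigma_3\rangle^{\frac12-\delta}\lesssim P/L^{\frac12-\delta}$ is bounded, with an $L_{\max}^{-\delta}$ to spare, by exactly the frequency case analysis you already performed. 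The third factor then goes to $L^2_{t,x}$ with its full Airy weight spent, while the other two inputs \emph{and the dual function} now all carry exponent $\frac12+\delta$ and go to $L^6$ via \eqref{xsb2}, the $\langle\cdot\rangle^{\delta}$ spatial losses being absorbed by the symbol just as in your dominant-output case. With this substitution your argument closes and the two special treatments you proposed (bounded-frequency reduction and lattice counting) become unnecessary.
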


\begin{proof}
Note that for all the terms above, $\|\cdot\|_{X^{s,b}} = \|\cdot\|_{Y^{s,b}}$.  Thus it will suffice to show the desired estimate with respect to the $X^{s,b}$ norm. \\

For this trilinear estimate, we use the embedding \eqref{xsb2}.
First we localize each variable in terms of its dispersive frequencies, i.e. $\langle \tau_j- \xi_j^3\rangle \sim L_j$ for $j=1,2,3$ and $\langle \tau-\xi^3\rangle \sim L$, where $L, L_j\gtrsim 1$ are dyadic indices.  We only need to insure that the final estimate includes $L_{\max}^{-\varepsilon}$ for some $\varepsilon>0$ so that sum in these indices (and also gain a small positive power of $T$).\\

First consider the case when $L \gg \max (L_1, L_2, L_3)$.  From the identity $\sum_{j=1}^3 (\tau_j - \xi_j^3) = (\tau-\xi^3)+3(\xi_1+ \xi_2)(\xi_2+\xi_3)(\xi_3+\xi_1)$ for every $\sum_{j=1}^3 \xi_j=\xi$ and $\sum_{j=1}^3 \tau_j=\tau$, we can deduce that $L \sim |\xi_1+ \xi_2||\xi_2+\xi_3||\xi_3+\xi_1|\geq 1$.\\

For fixed $L, L_1, L_2, L_3$, apply Plancherel and H\"older, followed by \eqref{xsb2} to obtain

\begin{align*}
& \| \sum_{\tiny \begin{array}{c}(\xi_1+\xi_2)(\xi_2+\xi_3)(\xi_3+\xi_1)\neq 0\\ \xi_1+\xi_2+\xi_3=\xi, \quad \xi_3 \neq 0 \end{array} } \frac{\langle \xi_1\rangle^s \langle\xi_2\rangle^s \langle \xi_3\rangle^{s}\langle \xi\rangle^{\frac{1}{2}-s}}{i\xi_3\langle\tau-\xi^3 \rangle^{\frac{1}{2}-\delta}} [\widetilde{v}(\xi_1)*_{\tau} \widetilde{v}(\xi_2)*_{\tau} \widetilde{v}(\xi_3)](\tau)\|_{L^2_{\tau} l^2_{\xi}(\mathbf{Z}\setminus \{0\})}\\
	&\lesssim_{\delta} M' \| \widetilde{v_{-\delta}} * ( \widetilde{v_{-\delta}} * \widetilde{v_{-\delta}})\|_{L^2_{\tau}l^2_{\xi}} \sim M' \|  (v_{-\delta})^3\|_{L^2_{t,x}} \lesssim M' \| v_{-\delta}\|_{L^6_{t,x}}^3  \lesssim_{\delta} M' \|v\|_{X^{0,\frac{1}{2}+\delta}}^3
\end{align*}
where $\widetilde{v_{-\delta}} (\tau,\xi) := \langle \xi \rangle^{-\delta} |\widetilde{v}|(\tau,\xi)$ and 
\begin{align}
M' &:= \sup_{\tiny \begin{array}{c}(\xi_1+\xi_2)(\xi_2+\xi_3)(\xi_3+\xi_1)\neq 0\\ \xi_1+\xi_2+\xi_3=\xi,\quad \xi_3 \neq 0 \end{array}} \frac{\langle \xi_1\rangle^{s+\delta} \langle\xi_2\rangle^{s+\delta} \langle \xi_3\rangle^{s+\delta}\langle \xi\rangle^{\frac{1}{2}-s}}{|\xi_3| L^{\frac{1}{2}-\delta}} \label{mpdef}\\
	&\lesssim \sup_{\tiny \begin{array}{c}(\xi_1+\xi_2)(\xi_2+\xi_3)(\xi_3+\xi_1)\neq 0\\ \xi_1+\xi_2+\xi_3=\xi \end{array}} \frac{\langle \xi_1\rangle^{s+\delta} \langle\xi_2\rangle^{s+\delta} \langle \xi\rangle^{\frac{1}{2}-s}}{|\xi_3|^{1-s-\delta}(|\xi_1+ \xi_2||\xi_2+\xi_3||\xi_3+\xi_1|)^{\frac{1}{2}-2\delta} L_{\max}^{\delta}}.\notag
\end{align}

We split into two generic cases: 1) $|\xi|\sim |\xi_1|\sim |\xi_2| \sim |\xi_3|$, 2) $|\xi|\sim |\xi_1|\sim |\xi_2|  \gg |\xi_3|$.  Note that the other cases are easier  and naturally follow from the given cases.\\

\begin{itemize}
\item[Case 1.] If $|\xi|\sim |\xi_1|\sim |\xi_2| \sim |\xi_3|$, note $|\xi_1+ \xi_2||\xi_2+\xi_3||\xi_3+\xi_1| \gtrsim \xi$.
$$
M' \lesssim \sup_{\xi} \frac{\langle \xi \rangle^{2s-\frac{1}{2}+3\delta}}{(|\xi_1+ \xi_2||\xi_2+\xi_3||\xi_3+\xi_1|)^{\frac{1}{2}-2\delta}L_{\max}^{\delta}} \lesssim L_{\max}^{-\delta}\sup_{\xi} \langle \xi \rangle^{2s-1+5\delta} \leq L_{\max}^{-\delta}.
$$
	
\item[Case 2.] If $|\xi_1|\sim |\xi_2| \sim |\xi|\gg |\xi_3|$, note $|\xi_1+ \xi_2||\xi_2+\xi_3||\xi_3+\xi_1| \gtrsim \langle \xi\rangle^2$.
$$
M' \lesssim \sup_{\xi} \frac{\langle \xi \rangle^{s+\frac{1}{2}+2\delta}}{(|\xi_1+ \xi_2||\xi_2+\xi_3||\xi_3+\xi_1|)^{\frac{1}{2}-2\delta}L_{\max}^{\delta}} \lesssim L_{\max}^{-\delta}\sup_{\xi} \langle \xi \rangle^{s-\frac{1}{2}+4\delta} \leq L_{\max}^{-\delta}.
$$
\end{itemize}

This concludes our estimate for the case $L \gtrsim \max (L_1, L_2, L_3)$.  On the other hand, if $L_1 \gtrsim \max(L,L_2,L_3)$, we can use the same method as above after a brief justification.  Note that in this case, $L_1 \gtrsim |\xi_1+ \xi_2||\xi_2+\xi_3||\xi_3+\xi_1|\geq 1$.  Thus the same estimates will follow once we can substitute $L_1$ in place of $L$ in \eqref{mpdef}.  The following computations can be used to justify such substitution:
Let $u,v,w \in X^{0,\frac{1}{2}+\delta}$ be localized in frequency space with $L_1\gtrsim L$. 
\begin{align*}
\|uvw\|_{X^{0,-\frac{1}{2}+\delta}} &\sim \|\frac{[\widetilde{u}* (\widetilde{v} *  \widetilde{w})](\tau,\xi)}{\langle \tau - \xi^3\rangle^{\frac{1}{2}-\delta}}\|_{L^2_{\tau} l^2_{\xi}}\\
	&\hspace{-50pt}\sim  \sup_{\|z\|_{L^2_{\tau}l^2_{\xi}}=1} \left| \int_{\tau_1+\tau_2+\tau_3=\tau} \sum_{\xi_1+\xi_2+\xi_3=\xi}\frac{\widetilde{u}(\tau_1 ,\xi_1)\widetilde{v}(\tau_2 ,\xi_2)\widetilde{w}(\tau_3 ,\xi_3)}{\langle \tau-\xi^3 \rangle^{\frac{1}{2}-\delta}} z(\tau,\xi) d\sigma\right| \\
	&\hspace{-50pt}\lesssim \sup_{\|z\|_{L^2_{\tau}l^2_{\xi}}=1} \int_{\tau_1+\tau_2+\tau_3=\tau} \sum_{\xi_1+\xi_2+\xi_3=\xi}\frac{(L_1^{\frac{1}{2}+\delta} |\widetilde{u}|) |\widetilde{v}||\widetilde{w}||z|}{L^{\frac{1}{2}+\delta} L_1^{\frac{1}{2}-\delta}}   d\sigma \\
	&\hspace{-50pt}\lesssim M^* \|u\|_{X^{0,\frac{1}{2}+\delta}} \sup_{\|z\|_{L^2_{\tau}l^2_{\xi}}=1} \| (\frac{|z|}{ \langle \xi \rangle^{\delta}L^{\frac{1}{2}+\delta}} )* ( \widetilde{v_{-\delta}}*\widetilde{w_{-\delta}})\|_{L^2_{\tau_1} l^2_{\xi_1}}\\
	&\hspace{-50pt}\sim M^* \|u\|_{X^{0,\frac{1}{2}+\delta}} \sup_{\|z\|_{L^2_{\tau}l^2_{\xi}}=1} \|\mathcal{F}_{\tau_1,\xi_1}^{-1} \left[\frac{|z|}{ \langle \xi \rangle^{\delta}L^{\frac{1}{2}+\delta}} \right] v_{-\delta} w_{-\delta}\|_{L^2_{t,x}}\\
	&\hspace{-50pt}\lesssim_{\delta} M^* \|u\|_{X^{0,\frac{1}{2}+\delta}}\|v\|_{X^{0,\frac{1}{2}+\delta}}\|w\|_{X^{0,\frac{1}{2}+\delta}}
\end{align*}
where we have used H\"older and \eqref{xsb2} for the penultimate inequality, and
$$	
M^* := \sup_{\xi_1+\xi_2+\xi_3=\xi} \frac{\langle \xi\rangle^{\delta} \langle \xi_2\rangle^{\delta} \langle \xi_3 \rangle^{\delta}}{L_1^{\frac{1}{2}-\delta}}.
$$

Although above computations were done without the pseudo-differential operator for simplicity, it is easy to see that similar arguments can be used to reduce the case~$L_j \sim \max(L,L_1,L_2,L_3)$ for some $j=1,2,3$ to the case $L \gg \max(L_1,L_2,L_3)$.  This concludes the proof.
\end{proof}

The next lemma deals with the \emph{resonant} terms in \eqref{eqwper}.  To reduce the number of cases, we ignore the complex conjugation.  This does not cause any problem in the proof, since we do not take advantage of cancellations from here on. 

\begin{lemma}\label{res}
Let $R[f]$ be as defined in \eqref{defr} and let $h\in L^{\infty}_t H_x^1([0,T] \times \mathbf{T})$, $w\in X^{\frac{1}{2},\frac{1}{2}+\delta}$ be arbitrary.  Then for $s<1/2$, 
\begin{align*}
\|\frac{\langle \xi \rangle^{2s+\frac{1}{2}}}{\xi} \widehat{R[f]}^2 (\widehat{h}+\widehat{w})\|_{L^2_t l^2_{\xi}([0,T]\times\mathbf{Z}\setminus \{0\} )} &\lesssim_{T,\delta} \|f\|^2_{L^2_x} (\|h\|_{L^{\infty}_t H^1_x}+\|w\|_{L^2_t H^{\frac{1}{2}}_x})\\
\|\frac{\langle \xi \rangle^{2s+\frac{1}{2}}}{\xi}\widehat{R[f]} \widehat{h}^2\|_{L^2_t l^2_{\xi}([0,T]\times\mathbf{Z}\setminus \{0\} )} &\lesssim_{T,\delta} T^{\frac{1}{2}} \|f\|_{L^2_x} \|h\|^2_{L^{\infty}_t H^1_x}\\
\|\frac{\langle \xi \rangle^{2s+\frac{1}{2}}}{\xi} \widehat{R[f]} \widehat{w}(\widehat{h}+\widehat{w}) \|_{L^2_t l^2_{\xi}([0,T]\times\mathbf{Z}\setminus \{0\} )} &\lesssim_{T,\delta} T^{\frac{1}{2}} \|f\|_{L^2_x} \|h\|_{L^{\infty}_t H^1_x} (\|h\|_{L^{\infty}_t H^1_x} + \|w\|_{X^{\frac{1}{2},\frac{1}{2}+\delta}})\\
\|\frac{\langle \xi \rangle^{2s+\frac{1}{2}}}{\xi}\widehat{h} \widehat{w}(\widehat{h} +\widehat{w})\|_{L^2_t l^2_{\xi}([0,T]\times\mathbf{Z}\setminus \{0\} )} &\lesssim_{T,\delta} T^{\frac{1}{2}} \|h\|_{L^{\infty}_t H^1_x} \|w\|_{X^{\frac{1}{2},\frac{1}{2}+\delta}} (\|h\|_{L^{\infty}_t H^1_x}+ \|w\|_{X^{\frac{1}{2},\frac{1}{2}+\delta}})\\
\|\frac{\langle \xi \rangle^{2s+\frac{1}{2}}}{\xi}(\widehat{h}^3+\widehat{w}^3)\|_{L^2_t l^2_{\xi}([0,T]\times\mathbf{Z}\setminus \{0\} )} &\lesssim_{T,\delta} \|h\|_{L^{\infty}_t H^1_x}^3+ \|w\|_{X^{\frac{1}{2}+\delta}}^3.
\end{align*}
\end{lemma}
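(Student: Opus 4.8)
The plan is to take advantage of the fact that, unlike the non-resonant term treated in Lemma~\ref{nonres}, every product appearing here is a \emph{pointwise} product of Fourier coefficients in $\xi$ rather than a convolution. Thus, squaring each $L^2_t l^2_\xi$ norm reduces the estimate to controlling an expression of the form $\int_0^T \sum_{\xi \in \mathbf{Z}\setminus\{0\}} \frac{\langle\xi\rangle^{4s+1}}{\xi^2}\,\bigl|(\text{product of coefficients at }\xi)\bigr|^2\,dt$, where the integrand is a product of the moduli of the relevant Fourier coefficients evaluated at the single frequency $\xi$. The decisive observation is that the solution map $R$ defined in \eqref{defr} merely rotates the phase of each mode, so that $|\widehat{R[f]}(t,\xi)| = |\widehat{f}(\xi)|$ for all $t$ and all $\xi$. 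Consequently each factor of $R[f]$ may be replaced by the $t$-independent quantity $|\widehat{f}(\xi)|$, which enjoys both the pointwise bound $|\widehat{f}(\xi)| \le \|f\|_{L^2_x}$ and the summability $\sum_\xi |\widehat{f}(\xi)|^2 = \|f\|_{L^2_x}^2$.

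With the $R[f]$-factors removed, I would dispose of each remaining factor of $h$ or $w$ through the elementary bounds $|\widehat{h}(t,\xi)| \le \langle\xi\rangle^{-1}\|h(t)\|_{H^1_x}$ and $|\widehat{w}(t,\xi)| \le \langle\xi\rangle^{-\frac{1}{2}}\|w(t)\|_{H^{\frac{1}{2}}_x}$; in the estimates where $w$ is measured through its $X^{\frac{1}{2},\frac{1}{2}+\delta}$ norm, the embedding \eqref{xsb1} converts the latter into $\sup_t\|w(t)\|_{H^{\frac{1}{2}}_x}\lesssim\|w\|_{X^{\frac{1}{2},\frac{1}{2}+\delta}}$. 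In each of the five inequalities I retain precisely one frequency-summable factor in $l^2_\xi$---either a copy of $|\widehat{f}(\xi)|^2$ or one Sobolev norm of $h$ or $w$---and pull the remaining factors out of the sum via the $\langle\xi\rangle$-gains just listed, while the $L^\infty_t$ factors are extracted by $\int_0^T dt \le T\sup_t$, which accounts for the explicit powers $T^{\frac{1}{2}}$. The products then close term by term following the trilinear structure of each left-hand side.

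The crux---and the only place where the hypothesis $s<\frac{1}{2}$ is genuinely binding---is the bookkeeping of the $\langle\xi\rangle$-weights, which I expect to be the main though entirely routine obstacle. Since $|\xi|\ge 1$ on $\mathbf{Z}\setminus\{0\}$, one has $\frac{\langle\xi\rangle^{4s+1}}{\xi^2}\lesssim\langle\xi\rangle^{4s-1}$, and the leftover exponent of $\langle\xi\rangle$ multiplying the surviving $l^2_\xi$-sum must be nonpositive after the smoothing gains are distributed. The tightest instance is the first estimate: the two copies of $R[f]$ provide no smoothing whatsoever, and when the third factor is $\widehat{w}$ it contributes only $\langle\xi\rangle^{-1}$, leaving $\sum_\xi \langle\xi\rangle^{4s-2}|\widehat{f}(\xi)|^4 \le \|f\|_{L^2_x}^2\sum_\xi\langle\xi\rangle^{4s-2}|\widehat{f}(\xi)|^2$, which is bounded by $\|f\|_{L^2_x}^4$ exactly when $4s-2\le 0$, i.e. $s\le\frac{1}{2}$. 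Once this borderline case is arranged, every other term carries strictly more smoothing and closes with room to spare, the $T$-dependence of each constant harmlessly absorbing the $T^{\frac{1}{2}}$ factors.
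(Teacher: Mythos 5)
Your proposal is correct and takes essentially the same route as the paper's proof: since every term is a pointwise product of Fourier modes rather than a convolution, the paper likewise applies H\"older in $\xi$ and $t$, using $|\widehat{R[f]}(t,\xi)| = |\widehat{f}(\xi)| \le \|f\|_{L^2}$ in $l^\infty_\xi$, the Sobolev weights $\langle\xi\rangle^{-1}\|h(t)\|_{H^1}$ and $\langle\xi\rangle^{-\frac{1}{2}}\|w(t)\|_{H^{\frac{1}{2}}}$ (the latter via the embedding \eqref{xsb1}), and $\int_0^T dt \le T\sup_t$ for the $T^{\frac{1}{2}}$ factors. The only cosmetic difference is the distribution of H\"older in the borderline first estimate---you keep $|\widehat{f}|^2$ as the $l^2_\xi$-summable factor and pull out $\widehat{w}$ pointwise, while the paper keeps the weighted $\widehat{h}+\widehat{w}$ in $l^2_\xi$ and places both copies of $\widehat{R[f]}$ in $l^\infty_\xi$---and both bookkeepings produce the identical constraint $s \le \frac{1}{2}$.
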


\begin{proof}
Note that $w, h \in L^2_t H^{\frac{1}{2}}_x ([0,T]\times \mathbf{T})$ and 
$$
\|\widehat{R[f]}\|_{L_t^{\infty} l_{\xi}^{\infty} ([0,T]\times\mathbf{Z}\setminus \{ 0\} )} = \| \widehat{f}\|_{L_t^{\infty} l^{\infty}_{\xi} ([0,T]\times\mathbf{Z}\setminus \{ 0\} )} \lesssim \|f\|_{L^2}.
$$

Then H\"older's inequality combined with the above remark immediately proves the first estimate.\\

Also for any smooth function $u$, note $\|\widehat{u}\|_{L^{\infty}_t l^{\infty}_{\xi}} \lesssim \|u\|_{L_t^{\infty} L_x^2}$.  Applying $h \in L_t^{\infty} H^1_x \subset L_t^{\infty} L^2_x$ and $w\in X^{\frac{1}{2},\frac{1}{2}+\delta} \subset L^{\infty}_t L^2_x$, the rest of the estimates above follow by the same method.
\end{proof}

Finally, we establish the Lipschitz continuity of the map $R[f]$ on $L^2(\mathbf{T})$.

\begin{lemma}\label{lipschitz}
Let $R$ be defined as in \eqref{defr} with $s<1/2$ and $\gamma\in \mathbf{R}$.  Then for any $f, g\in L^2(\mathbf{T})$ with $f-g \in H^{\gamma}$,
$$
\|R[f] - R[g]\|_{C^0_t H^{\gamma}_x([0,T]\times \mathbf{T})} \leq C_{N,T} \|f-g\|_{H^{\gamma}(\mathbf{T})}
$$
where $\|f\|_{L^2}+\|g\|_{L^2} < N$.
\end{lemma}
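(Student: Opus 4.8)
The plan is to pass to the Fourier side and estimate the difference one frequency at a time. Since the $H^\gamma_x$ norm is the weighted $\ell^2_\xi$ norm of the spatial Fourier coefficients, and since the factor $e^{i(\xi x + \xi^3 t)}$ in \eqref{defr} contributes a modulus-one phase $e^{i\xi^3 t}$ that is common to $R[f]$ and $R[g]$, the spatial Fourier coefficient of the difference at frequency $\xi\neq 0$ has modulus
$$
\big| \widehat{R[f]}(t,\xi) - \widehat{R[g]}(t,\xi)\big| = \big| \widehat{f}(\xi)\, e^{2i\frac{\langle\xi\rangle^{2s}}{\xi}|\widehat{f}(\xi)|^2 t} - \widehat{g}(\xi)\, e^{2i\frac{\langle\xi\rangle^{2s}}{\xi}|\widehat{g}(\xi)|^2 t}\big|.
$$
Thus it suffices to control this quantity, weighted by $\langle\xi\rangle^\gamma$, uniformly for $t\in[0,T]$ in $\ell^2_\xi(\mathbf{Z}\setminus\{0\})$. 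Writing $a=\widehat{f}(\xi)$, $b=\widehat{g}(\xi)$, and $\phi_\xi = \frac{\langle\xi\rangle^{2s}}{\xi}$, I would split into an \emph{amplitude difference} and a \emph{phase difference}:
$$
a\, e^{2i\phi_\xi|a|^2 t} - b\, e^{2i\phi_\xi|b|^2 t} = (a-b)\, e^{2i\phi_\xi|a|^2 t} + b\big( e^{2i\phi_\xi|a|^2 t} - e^{2i\phi_\xi|b|^2 t}\big).
$$

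For the first term, the exponential is modulus one, so it contributes exactly $|a-b| = |\widehat{f}(\xi)-\widehat{g}(\xi)|$; multiplied by $\langle\xi\rangle^\gamma$ and summed in $\ell^2_\xi$ this yields precisely $\|f-g\|_{H^\gamma}$. For the second term I would use the elementary bounds $|e^{i\theta_1}-e^{i\theta_2}|\le|\theta_1-\theta_2|$ and $\big||a|^2-|b|^2\big|\le(|a|+|b|)\,|a-b|$, giving
$$
\big| e^{2i\phi_\xi|a|^2 t} - e^{2i\phi_\xi|b|^2 t}\big| \le 2\,|\phi_\xi|\,t\,\big||a|^2-|b|^2\big| \le 2\,|\phi_\xi|\,t\,(|a|+|b|)\,|a-b|.
$$
Here is where the hypothesis $s<1/2$ enters: since $|\xi|\ge 1$ for $\xi\neq 0$, one has $|\phi_\xi| = \frac{\langle\xi\rangle^{2s}}{|\xi|}\lesssim \langle\xi\rangle^{2s-1}\lesssim 1$, so the phase coefficient is bounded uniformly in $\xi$. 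Together with $t\le T$, the second term is bounded by $\lesssim T\,|b|\,(|a|+|b|)\,|a-b|$.

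To finish, I would exploit that the amplitude factors are bounded in $\ell^\infty_\xi$: from the normalization, $|\widehat{f}(\xi)|,|\widehat{g}(\xi)|\lesssim \|f\|_{L^2}+\|g\|_{L^2}<N$, whence $|b|(|a|+|b|)\lesssim N^2$ uniformly in $\xi$. Pulling this bound out leaves the factor $\langle\xi\rangle^\gamma|\widehat{f}(\xi)-\widehat{g}(\xi)|$, whose $\ell^2_\xi$ norm is again $\|f-g\|_{H^\gamma}$. Collecting the two contributions gives, uniformly in $t\in[0,T]$,
$$
\|R[f](t)-R[g](t)\|_{H^\gamma_x} \lesssim (1+TN^2)\,\|f-g\|_{H^\gamma},
$$
so taking the supremum over $t$ yields the claim with $C_{N,T}\sim 1+TN^2$. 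Continuity in time of $t\mapsto R[f](t)-R[g](t)$ in $H^\gamma_x$ follows from the fact that each Fourier coefficient is a continuous function of $t$ and is dominated by the fixed $\ell^2$ sequence $\langle\xi\rangle^\gamma(|\widehat f(\xi)|+|\widehat g(\xi)|)$, so dominated convergence gives membership in $C^0_t H^\gamma_x$. I do not anticipate a genuine obstacle: the only place requiring care is verifying that the nonlinearity lives entirely in a modulus-one phase, so that its Lipschitz dependence on $f$ is captured by the phase-difference estimate and the boundedness of $\phi_\xi$ afforded by $s<1/2$; everything else is routine.
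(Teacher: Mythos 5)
Your proof is correct, and it takes a genuinely different route from the paper's. You telescope the difference of Fourier coefficients as $(a-b)\,e^{2i\phi_\xi|a|^2t} + b\bigl(e^{2i\phi_\xi|a|^2t}-e^{2i\phi_\xi|b|^2t}\bigr)$ and then combine the Lipschitz bound $|e^{i\theta_1}-e^{i\theta_2}|\le|\theta_1-\theta_2|$ with $\bigl||a|^2-|b|^2\bigr|\le(|a|+|b|)\,|a-b|$, the uniform bound $|\phi_\xi|\lesssim 1$ (which is exactly where $s<1/2$ enters, in the paper too), and the $\ell^\infty_\xi$ amplitude bound $|\widehat f(\xi)|,|\widehat g(\xi)|\lesssim \|f\|_{L^2}+\|g\|_{L^2}$, valid on $\mathbf{T}$ by Cauchy--Schwarz and invoked by the paper itself in the proof of Lemma~\ref{res}. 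The paper instead passes to polar form, writing $\widehat f=|\widehat f|e^{i\alpha_\xi}$, $\widehat g=|\widehat g|e^{i\beta_\xi}$ with $\theta_\xi=\alpha_\xi-\beta_\xi$: it computes $|\widehat{R[f]}-\widehat{R[g]}|$ via the law of cosines, separates the nonlinear phase rotation from the initial phase discrepancy using $\sin^2(A+B)\lesssim A^2+\sin^2 B$, and then needs the law-of-sines observation $|\widehat f|\sin\theta_\xi\le|\widehat f-\widehat g|$ to convert the term involving $\theta_\xi$ back into the complex difference. Your telescoping sidesteps the polar decomposition entirely --- the phase discrepancy $\theta_\xi$ never appears because $a-b$ is kept as a complex difference --- so both trigonometric lemmas become unnecessary; in exchange the paper's modulus computation is exact at the first step, but both arguments ultimately rest on the same two facts (boundedness of $\langle\xi\rangle^{2s}/|\xi|$ and the $\ell^\infty$ amplitude bound) and both produce a constant of the form $C_{N,T}\sim 1+TN^2$. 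One small repair to your closing remark on time continuity: the dominating sequence $\langle\xi\rangle^{\gamma}\bigl(|\widehat f(\xi)|+|\widehat g(\xi)|\bigr)$ need not lie in $\ell^2_\xi$, since the hypothesis places only $f-g$, not $f$ and $g$ separately, in $H^{\gamma}$; you should instead dominate the coefficients of the difference by $(1+CTN^2)\,\langle\xi\rangle^{\gamma}|\widehat f(\xi)-\widehat g(\xi)|$, which your own estimates already provide and which is square-summable by hypothesis, after which the dominated convergence argument goes through as you describe.
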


\begin{proof}
First we write $\widehat{f}(\xi) = |\widehat{f}(\xi)| e^{i \alpha_{\xi}}$ and $\widehat{g}(\xi) = |\widehat{g}(\xi)| e^{i \beta_{\xi}}$.  Denote $\theta_{\xi} :=\alpha_{\xi} - \beta_{\xi} $  Then, the Law of cosines, triangle and H\"older's inequality gives
\begin{align*}
\|R[f] - R[g]\|_{C^0_t H^{\gamma}_x([0,T]\times \mathbf{T})} &= \sup_{t\in [0,T]} \|\langle \xi\rangle^{\gamma} (|\widehat{f}| e^{2it\frac{\langle \xi\rangle^{2s}}{\xi}(|\widehat{f}|^2 - |\widehat{g}|^2 )+ i\theta_{\xi} } -  |\widehat{g}| )\|_{l^2_{\xi} (\mathbf{Z}\setminus \{0\})}\\
	&\hspace{-120pt}= \sup_{t\in [0,T]} \|\langle \xi\rangle^{\gamma} \left( |\widehat{f}|^2 + |\widehat{g}|^2 - 2 |\widehat{f}| |\widehat{g}| \cos ( 2t\frac{\langle \xi\rangle^{2s}}{\xi}(|\widehat{f}|^2 - |\widehat{g}|^2 ) + \theta_{\xi} ) \right)^{\frac{1}{2}}\|_{l^2_{\xi} (\mathbf{Z}\setminus \{0\})}\\
	&\hspace{-120pt}\lesssim \|\langle \xi\rangle^{\gamma} (|\widehat{f}|-|\widehat{g}|)\|_{l^2_{\xi}} + 2 \sup_{t\in [0,T]} \|\langle \xi\rangle^{2\gamma}  |\widehat{f}| |\widehat{g}|( 1 - \cos ( 2t\frac{\langle \xi\rangle^{2s}}{\xi}(|\widehat{f}|^2 - |\widehat{g}|^2 )+ \theta_{\xi} )\|^{\frac{1}{2}}_{l^1_{\xi} (\mathbf{Z}\setminus \{0\})}\\
	&\hspace{-120pt}\lesssim \|f-g\|_{H^{\gamma}} + 4\sup_{t\in [0,T]}  \|\langle \xi\rangle^{2\gamma}  |\widehat{f}| |\widehat{g}| \sin^2 \left(t\frac{\langle \xi\rangle^{2s}}{\xi}(|\widehat{f}|^2 - |\widehat{g}|^2 )+ \theta_{\xi} \right)\|^{\frac{1}{2}}_{l^{1}_{\xi} (\mathbf{Z}\setminus \{0\})}.
\end{align*}
Using $\sin^2 (A+B) \lesssim A^2 + \sin^2 B$ and the assumption $s<1/2$, we need to estimate
\begin{align}
& \|\langle \xi\rangle^{2\gamma}  |\widehat{f}| |\widehat{g}| (|\widehat{f}|^2 - |\widehat{g}|^2 )^2\|^{\frac{1}{2}}_{l^{1}_{\xi} (\mathbf{Z}\setminus \{0\})},\label{lip1}\\
&\|\langle \xi\rangle^{2\gamma}  |\widehat{f}| |\widehat{g}| \sin^2 \theta_{\xi} \|^{\frac{1}{2}}_{l^{1}_{\xi} (\mathbf{Z}\setminus \{0\})}.\label{lip2}
\end{align}
The bound for \eqref{lip1} is straight-forward.  By H\"older's and triangle inequalty,
$$
\eqref{lip1} \lesssim \|f\|_{L^2}^{\frac{1}{2}} \|g\|_{L^2}^{\frac{1}{2}} \|\langle \xi \rangle^{\gamma} (\widehat{f} - \widehat{g}) (|\widehat{f}| + |\widehat{g}|)\|_{l^{\infty}_{\xi}} \lesssim \|f\|_{L^2}^{\frac{3}{2}} \|g\|_{L^2}^{\frac{3}{2}} \|f-g\|_{H^{\gamma}}.
$$

For \eqref{lip2}, we apply the Law of sines.  Without loss of generality, we can assume $\theta_{\xi} \in (0,\pi)$.  Noting that the triangle with side-lengths equal to $|\widehat{f}|, |\widehat{g}|, |\widehat{f} - \widehat{g}|$ has the angle~$\theta_{\xi}$ which is opposite to the side with length $|\widehat{f}-\widehat{g}|$, we can deduce that $|\widehat{f}|\sin \theta_{\xi}\leq |\widehat{f} - \widehat{g}|$ and likewise for $|\widehat{g}|$.  Thus,
$$
\eqref{lip2} \leq \|\langle \xi\rangle^{2\gamma} |\widehat{f} - \widehat{g}|^2 \|_{l^1_{\xi}}^{\frac{1}{2}} \sim \|\langle \xi \rangle^{\gamma} |\widehat{f} - \widehat{g}|\|_{l^2_{\xi}} \sim \|f-g\|_{H^{\gamma}}.
$$
\end{proof}

\subsection{Conclusion of the proof of Theorem~\ref{thm2}}
\label{s33}
Now we turn to the proof of Theorem~\ref{thm2}.  Note that the existence of $w \in L^2_{t,x}([0,T]\times \mathbf{T})$ as the solution of \eqref{eqwper} is given by the decomposition $w= v - R[f] - h$.  However, we claim that the solution~$w$ lives in a \emph{smoother} space, that is $Y^{\frac{1}{2},\frac{1}{2}+\delta}$.\\

Let $T \in (0,1/2)$ be small.   We construct the Duhamel map~$\Lambda_T$ on $Y^{\frac{1}{2},\frac{1}{2}-\delta}$ by
\begin{equation}\label{duhamelper}
[\Lambda_T w](t) := -\eta(t) e^{-t\p_x^3} T(f,f) + \eta(\frac{t}{T})\int_0^t e^{-(t-s)\p_x^3}[\mathcal{N}_1 + \mathcal{N}_2]\, ds
\end{equation}
for any $w\in Y^{\frac{1}{2},\frac{1}{2}+\delta}$ where
$$
\mathcal{N}_1 := -2\mathcal{F}^{-1}_{\xi} ( \mathcal{NR}); \qquad 
\mathcal{N}_2 := 2 \sum_{\xi\neq 0}\frac{\langle \xi\rangle^{2s}}{\xi} B( \widehat{R[f]}(\xi), \widehat{h}(\xi), \widehat{w}(\xi)) e^{i\xi x}.
$$

Let $\mathcal{B}$ be a ball in $Y^{\frac{1}{2},\frac{1}{2}+\delta}$ centered at $-\eta(t) e^{-t\p_x^3} T^p(f,f)$ with \emph{small} radius.  Then our aim is to show that, for $T$ small, $\Lambda_T$ is a contraction map on $\mathcal{B}$. \\

From Proposition~\ref{xsb}, we have
$$
\|\Lambda_T w\|_{Y^{\frac{1}{2},\frac{1}{2}+\delta}} \lesssim_{\eta} \|T(f,f)\|_{H^{\frac{1}{2}}}+ \|\mathcal{N}_1\|_{Y_T^{\frac{1}{2},-\frac{1}{2}+\delta}}+ \|\mathcal{N}_2\|_{Y_T^{\frac{1}{2},-\frac{1}{2}+\delta}}.
$$

For, the first term, we apply Lemma~\ref{tpmap1},
\begin{equation}\label{per1}
\|T(f,f)\|_{H_x^{\frac{1}{2}}}\lesssim \|T(f,f)\|_{H^1_x} \lesssim \|f\|_{L^2}^2.
\end{equation}

For the non-linear term~$\mathcal{N}_1$, we use Lemma~\ref{nonres} and $\|v\|_{Y^{0,\frac{1}{2}+\delta}} \sim \|f\|_{L^2}$,
\begin{equation}\label{per2}
\|\mathcal{N}_1\|_{Y_T^{\frac{1}{2},-\frac{1}{2}+\delta}} \lesssim_{\delta,T} \|v\|_{Y^{0,\frac{1}{2}+\delta}}^3 \sim \|f\|_{L^2}^3.
\end{equation}

For the last term~$\mathcal{N}_2$, we apply Lemma~\ref{res} and $\|h\|_{L^{\infty}_t H^1_x} \lesssim \|v\|_{X^{\frac{1}{2}+\delta}}^2 \sim \|f\|_{L^2}^2$,
\begin{equation}\label{per3}
\|\mathcal{N}_2\|_{Y_T^{\frac{1}{2},-\frac{1}{2}+\delta}} \lesssim_{\delta,T} \|\frac{\langle \xi\rangle^{2s+\frac{1}{2}}}{\xi} B(\widehat{R[f]}, \widehat{h}, \widehat{w})\|_{L^2_t l^2_{\xi}([0,T]\times \mathbf{Z}\setminus \{0\})} \lesssim_{\delta,T, \|f\|_{L^2}} \|w\|^3+ \|w\|^2 + \|w\| +1
\end{equation}
where the implicit constant in the last inequality involves a positive power of $\|f\|_{L^2}$.  Thus making $T$ suitably small with respect to $\|f\|_{L^2}$, we note that $\Lambda_T$ is a contraction on $\mathcal{B}$.\\

To show Lipschitz property, let $R[f^k]$, $h^k$, $w^k$ for $k=1,2$ be the corresponding solutions with $f$ replaced by $f^k$. Recall $v^k = R[f^k] + h^k + w^k$.  Then we need to show
$$
\|v^1- v^2 \|_{C_t([0,T]; H_x^{\frac{1}{2}})} \lesssim_{N, \delta} \|f^1 - f^2\|_{H^{\frac{1}{2}}_x}
$$
where $\|f^1\|_{L^2} + \|f^2\|_{L^2} < N$.  For the first term, we apply Lemma~\ref{lipschitz},
$$
\|R[f^1] - R[f^2]\|_{C_t([0,T];H^{\frac{1}{2}}_x)} \lesssim_{N, \delta} \|f^1-f^2\|_{H^{\frac{1}{2}}_x}.
$$

For the second term, we use Lemma~\ref{tpmap1} and Lipschitz map of \eqref{equ} to obtain
$$
\|h^1-h^2\|_{C([0,T]; H^{\frac{1}{2}}_x)} \lesssim \|T(v^1+ v^2, v^1-v^2)\|_{L^{\infty}_t H^1_x} \lesssim \|v^1+v^2\|_{L^2}\|v^1-v^2\|_{L^2} \lesssim_N \|f^1-f^2\|_{L^2}.
$$
 
 From the formulation~\eqref{duhamelper}, we have 
 \begin{align*}
 \|w^1-w^2\|_{C_t([0,T]; H^{\frac{1}{2}}_x)} &\lesssim \|w^1-w^2\|_{Y^{\frac{1}{2},\frac{1}{2}+\delta}}\\
 	&\lesssim \|T(f^1,f^1) - T(f^2, f^2)\|_{H^{\frac{1}{2}}_x} + \sum_{j=1}^2 \|\mathcal{N}^1_j- \mathcal{N}^2_j\|_{Y^{\frac{1}{2},-\frac{1}{2}+\delta}}
 \end{align*}
 where for $\mathcal{N}_j^k$  is defined with respect to the initial data $f^k$ for $k=1,2$.  Then the desired estimate follows from estimates \eqref{per1} through \eqref{per3}.

\end{document}